\theoremstyle{plain}
\newtheorem{thm}{Theorem}[section]
\newtheorem{prop}[thm]{Proposition}
\theoremstyle{definition}
\newtheorem{defn}[thm]{Definition}
\theoremstyle{remark}
\newcommand{\cG}{\mathcal{G}}
\newcommand{\cI}{\mathcal{I}}
\newcommand{\bZ}{\mathbb{Z}}
\newcommand{\bC}{\mathbb{C}}
\newcommand{\tr}{\operatorname{tr}}
\newcommand{\SYM}{\mathrm{Sym}}
\newcommand{\ip}[2]{\left\langle#1,#2\right\rangle}
\newcommand{\norm}[1]{\left\Vert#1\right\Vert}
\newcommand{\abs}[1]{\left\vert#1\right\vert}
\newcommand{\absip}[2]{\left\vert\langle#1,#2\rangle\right\vert}
\newcommand{\beq}{\begin{equation}}
\newcommand{\eeq}{\end{equation}}
\DeclareMathOperator*{\TP}{TP}
\DeclareMathOperator*{\diag}{diag}
\begin{document}

\title{$2$- and $3$-Covariant Equiangular Tight Frames}

\author{\IEEEauthorblockN{Emily J.\ King}
\IEEEauthorblockA{Center for Industrial Mathematics\\
University of Bremen, Bremen, Germany\\
Email: king@math.uni-bremen.de}}

\maketitle

\begin{abstract}
Equiangular tight frames (ETFs) are configurations of vectors which are optimally geometrically spread apart and provide resolutions of the identity. Many known constructions of ETFs are group covariant, meaning they result from the action of a group on a vector, like all known constructions of symmetric, informationally complete, positive operator-valued measures. In this short article, some results characterizing the transitivity of the symmetry groups of ETFs will be presented as well as a proof that an infinite class of so-called Gabor-Steiner ETFs are roux lines, where roux lines are a generalization of doubly transitive lines. \end{abstract}

\section{Introduction}
Frames are generalizations of orthonormal bases which have applications in signal processing, quantization, coding theory, and more~\cite{frame_book,Waldron18}.  Equiangular tight frames are the closest analog to orthonormal bases in a redundant setting and are known to give representations of data that are optimally robust to erasures and noise~\cite{StH03}.  Many equiangular tight frames of interest are generated by group actions.  Understanding the higher order symmetries of a equiangular tight frame yields information about the structure of the frame and when such equiangular tight frames may exist.  In Section~\ref{sec:gpcov} double and triple covariance of equiangular tight frames are characterized -- completely so in the latter case --, generalizing results in the quantum information literature (in particular~\cite{Zhu15}) about symmetric, informationally complete, positive operator-valued measures, which are a specific class of equiangular tight frames.  In Section~\ref{sec:drackn}, the covariance properties of so-called Gabor-Steiner equiangular tight frames~\cite{BoKi18} are explored.  In particular, a class of Gabor-Steiner equiangular tight frames are shown to be roux lines, a generalization of both abelian distance-regular antipodal covers of the complete graph and doubly transitive equiangular tight frames~\cite{IvMi18}.

\section{Equiangular Tight Frames and Group Covariance}\label{sec:gpcov}

Equiangular tight frames emulate the algebraic and geometric properties of orthonormal bases but may be redundant.  
\begin{defn}
Let $\Phi = \{\varphi_j\}_{j=1}^n \subset \bC^d$. Then $\Phi$ is an \emph{equiangular tight frame (ETF)} if the following hold: 
\begin{enumerate}
\item for all $x \in \bC^d$, $x= \frac{d}{n} \sum_{j=1}^n \langle x, \varphi_j \rangle \varphi_j,$
\item $\norm{\varphi_j} = 1$ for all $j \in \{1, \hdots, n\}$, and
\item there exists $\alpha \geq 0$ such that $\absip{\varphi_j}{\varphi_k}=\alpha$  for all $j \neq k$.
\end{enumerate}
\end{defn}
It ends up that traits 1) and 3) imply that the absolute values of the inner products are optimally small; that is, the vectors are as geometrically as spread as possible.
\begin{thm}\cite{StH03,LS1973} \label{thm:Welch}
Let $\Phi = \{\varphi_j\}_{j=1}^n \subset \bC$ be a set of unit vectors.  Then
\begin{equation}\label{eqn:Welch}
\max_{j \neq k} \absip{\varphi_j}{\varphi_k} \geq \sqrt{\frac{n-d}{d(n-1)}}.
\end{equation}
The bound in~\eqref{eqn:Welch} is saturated if and only if $\Phi$ is an equiangular tight frame.  Further, the bound in~\eqref{eqn:Welch} may only be saturated if $n \leq d^2$.
\end{thm}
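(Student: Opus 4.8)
The plan is to pass from the vectors to the frame operator $S = \sum_{j=1}^n \varphi_j \varphi_j^*$, a positive semidefinite $d \times d$ matrix, and to extract the bound from two elementary trace identities combined with one Cauchy--Schwarz step. First I would record that, since the $\varphi_j$ are unit vectors, $\tr(S) = \sum_j \norm{\varphi_j}^2 = n$, while $\tr(S^2) = \sum_{j,k} \absip{\varphi_j}{\varphi_k}^2 = n + \sum_{j \neq k} \absip{\varphi_j}{\varphi_k}^2$. Applying Cauchy--Schwarz to the $d$ eigenvalues of $S$ gives $(\tr S)^2 \leq d\, \tr(S^2)$, that is, $n^2/d \leq n + \sum_{j \neq k} \absip{\varphi_j}{\varphi_k}^2$, whence $\sum_{j \neq k} \absip{\varphi_j}{\varphi_k}^2 \geq n(n-d)/d$. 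Dividing by the number $n(n-1)$ of ordered off-diagonal pairs bounds the \emph{average} of the squared inner products below by $(n-d)/(d(n-1))$, and since the maximum dominates the average, taking square roots yields~\eqref{eqn:Welch}.

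For the equality characterization I would track exactly the two inequalities just used. The Cauchy--Schwarz step is tight precisely when all eigenvalues of $S$ coincide, i.e.\ $S = (n/d)\,I$, which is the tight-frame property~1). The passage from average to maximum is tight precisely when every off-diagonal $\absip{\varphi_j}{\varphi_k}$ equals the common value $\alpha$, which is the equiangularity condition~3). Hence~\eqref{eqn:Welch} is saturated if and only if both hold, i.e.\ $\Phi$ is an ETF.

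For the final assertion I would change the ambient space, working in the real vector space $\Sym$ of $d \times d$ Hermitian matrices, which has real dimension $d^2$, and consider the rank-one projections $P_j = \varphi_j \varphi_j^*$. Their Hilbert--Schmidt inner products are $\tr(P_j P_k) = \absip{\varphi_j}{\varphi_k}^2$, so for an ETF the Gram matrix of $\{P_j\}$ equals $(1-\alpha^2) I + \alpha^2 J$, where $J$ is the all-ones matrix; its eigenvalues are $1-\alpha^2$, with multiplicity $n-1$, and $1+(n-1)\alpha^2$. Since $\alpha^2 = (n-d)/(d(n-1)) < 1$ for $d \geq 2$, this Gram matrix is positive definite, so the $P_j$ are linearly independent in $\Sym$, forcing $n \leq d^2$.

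The trace computations are routine; the step demanding the most care is the last one, where the bound $n \leq d^2$ arises not from the frame inequality itself but from recognizing the $P_j$ as linearly independent elements of the $d^2$-dimensional space of Hermitian matrices and verifying the strict positivity of their Gram matrix.
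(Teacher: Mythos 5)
Your proof is correct. The paper itself offers no proof of this theorem --- it is stated as a citation to the references \cite{StH03,LS1973} --- so there is no in-paper argument to compare against; what you have written is precisely the standard proof from that literature: the trace identities $\tr(S)=n$ and $\tr(S^2)=\sum_{j,k}\absip{\varphi_j}{\varphi_k}^2$ for the frame operator, the Cauchy--Schwarz inequality $(\tr S)^2\le d\,\tr(S^2)$ on eigenvalues, the max-versus-average step, and, for Gerzon's bound, linear independence of the rank-one projections $\varphi_j\varphi_j^\ast$ inside the $d^2$-dimensional real vector space of Hermitian matrices. Your bookkeeping of which inequality forces which ETF axiom at equality is also the right way to get the ``if and only if.'' One small point worth keeping explicit: your restriction to $d\ge 2$ in the last step is genuinely necessary, since for $d=1$ one has $\alpha=1$, the Gram matrix $(1-\alpha^2)I+\alpha^2 J$ degenerates to the rank-one matrix $J$, and the projections coincide; the theorem as stated (which also has the typo $\bC$ for $\bC^d$) silently assumes the vectors span distinct lines, and your proof correctly isolates exactly where that assumption enters.
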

The bound in~\eqref{eqn:Welch} is called the \emph{Welch bound}, and the bound $n \leq d^2$ is \emph{Gerzon's bound}.  
It is conjectured that there is always an ETF of $d^2$ vectors in $\bC^d$ \cite{Zauner1999}; this is called \emph{Zauner's conjecture}.
This conjecture originally arose in quantum information theory, where such maximal ETFs are called \emph{symmetric, informationally complete, positive operator-valued measures (SICs)}. A stronger variant of Zauner's conjecture is that such SICs may always be generated as the orbit of a single vector under a projective unitary representation of $\bZ_p \times \bZ_p$ related to a finite Weyl-Heisenberg group. In general, we call any ETF which is formed as the orbit of a single vector under a (projective) unitary representation \emph{group covariant}.  By definition, the action of the group on a group covariant ETF is \emph{transitive}; that is, given any two vectors in the ETF, there is a unitary mapping parameterized by a group element that maps one to the other.  Unitary transformations leave the quantum state space invariant, so it is of interest to ask when permutations of the associated rank-one projections of a SIC~\cite{Zhu15} or other group covariant ETF can be realized by such operators. 
\begin{defn}
For an ETF $\Phi$ parameterized by a group we denote by $G$ the group of unitary operators for which the ETF is invariant.  That is, for all $U \in G$ and $\varphi_j \in \Phi$, $U \varphi_j \varphi_j^\ast U^\ast = \varphi_{\sigma(j)} \varphi_{\sigma(j)}^\ast$ for some permutation $\sigma$ of the group parameterization. The \emph{symmetry group} of $\Phi$ is $\overline{G}=G/S^1$, that is, $G$ up to multiplication by universal phase factor. If the symmetry group maps every ordered $k$-tuple of distinct elements to every ordered $k$-tuple of distinct elements (i.e., is $k$-transitive), then we call the ETF \emph{$k$-covariant}. 
\end{defn}
The symmetry group yields important structural information about the ETF (see, e.g.,~\cite{AFF11}). Doubly and triply covariant SICs were completely classified in~\cite{Zhu15}.
\begin{thm}\cite{Zhu15} \label{thm:3SIC}
There are no triply covariant SICs. Up to equivalence, the doubly covariant SICs are
\begin{itemize}
\item SICs in $\bC^2$,
\item the Hesse SIC (a certain type of SIC in $\bC^3$ with many linear dependencies, also the Gabor-Steiner-ETF over $\bZ_3$~\cite{Hugh07,DBBA2013,BoKi18}), and
\item Hoggar's lines (a sporadic SIC formed by the Weyl-Heisenberg group over $\bZ_2 \times \bZ_2 \times \bZ_2$ rather than a cyclic group \cite{Hog98}).
\end{itemize}
\end{thm}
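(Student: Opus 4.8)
The plan is to recast everything inside the real inner-product space $\mathcal V$ of traceless Hermitian $d\times d$ matrices (of dimension $d^2-1$) and to study the action of $\overline G$ there. Writing $P_j=\varphi_j\varphi_j^\ast$, the equiangular and tight-frame conditions translate, after centering, into the statement that $v_j:=P_j-\tfrac1d I$ satisfies $\tr(v_jv_k)=\tfrac{d-1}{d}$ for $j=k$ and $-\tfrac1{d(d+1)}$ for $j\neq k$; that is, $\{v_j\}_{j=1}^{d^2}$ is a scaled copy of the vertex set of a regular simplex in $\mathcal V$. One checks that $\sum_j v_j=0$ and that the $v_j$ span $\mathcal V$. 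Since $\overline G\subseteq\mathrm{PU}(d)$ acts on $\mathcal V$ by conjugation $v\mapsto UvU^\ast$, sending $v_j\mapsto v_{\sigma(j)}$ for the associated permutation $\sigma$, the map $e_j\mapsto v_j$ identifies $\mathcal V$ with the ``heart'' (augmentation) submodule of the degree-$d^2$ permutation module of $\overline G$. In particular, $k$-covariance is exactly the statement that $\overline G$ acts $k$-transitively on the $d^2$ simplex vertices.

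For the nonexistence of triply covariant SICs I would exploit an invariant distinguishing even from odd permutations of the vertices. Define the alternating trilinear form
\[
F(A,B,C)=\tfrac1i\,\tr\!\big(A[B,C]\big),\qquad A,B,C\in\mathcal V .
\]
It is real-valued, totally antisymmetric (by cyclicity of the trace), conjugation-invariant, and nonzero for every $d\ge 2$ (the Pauli matrices already witness $F\neq0$). Setting $F_{jkl}:=F(v_j,v_k,v_l)$, invariance under $\overline G$ gives $F_{\sigma(j)\sigma(k)\sigma(l)}=F_{jkl}$ for all $\sigma\in\overline G$. If $\overline G$ were $3$-transitive, then for any distinct $j,k,l$ it would contain an element realizing the transposition $(j\,k)$ while fixing $l$; comparing with total antisymmetry forces $F_{jkl}=F_{kjl}=-F_{jkl}$, hence $F_{jkl}=0$. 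Since $F$ vanishes whenever two arguments coincide and the $v_j$ span $\mathcal V$, this yields $F\equiv0$, a contradiction. Thus no SIC can be triply covariant -- geometrically, conjugation by unitaries realizes only orientation-preserving symmetries of the simplex, never a single transposition.

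For the doubly covariant classification I would start from the identification above: $2$-covariance means the $(d^2-1)$-dimensional heart module $\mathcal V$ is irreducible and equals the standard module of a $2$-transitive group $\overline G$ of degree $d^2$. Complexifying, $\overline G$ must carry a faithful $d$-dimensional projective unitary representation $W$ with $W\otimes W^\ast\cong\bC\oplus\mathcal V_{\bC}$, tightly coupling the permutation degree $n=d^2$ to an irreducible representation of dimension $d=\sqrt n$. Conversely, any $2$-transitive $\overline G\subseteq\mathrm{PU}(d)$ with an orbit of exactly $d^2$ lines automatically produces a SIC: transitivity on lines forces the orbit to be tight (its frame operator commutes with the irreducible $\overline G$-action), transitivity on pairs forces equiangularity, and $d^2$ lines saturate Gerzon's bound in Theorem~\ref{thm:Welch}. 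The task therefore reduces to enumerating the finite $2$-transitive groups of perfect-square degree $d^2$ admitting such a $d$-dimensional projective representation, and then checking the equiangularity constraint. Running through the classification of finite $2$-transitive groups leaves exactly three admissible configurations: degree $4$ with $\overline G\cong A_4\subseteq\mathrm{PU}(2)\cong SO(3)$ (the tetrahedral SIC in $\bC^2$), degree $9$ with the Hessian group $3^2{:}SL(2,3)\subseteq\mathrm{PU}(3)$ (the Hesse SIC), and degree $64$ with the real Clifford group over $\bZ_2^3$ (Hoggar's lines in $\bC^8$).

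The main obstacle is this last enumeration. The nonexistence of triply covariant SICs is essentially elementary once the invariant $F$ is in hand, but pinning down the doubly covariant ones requires sifting the full list of finite $2$-transitive groups and, for each candidate of square degree $d^2$, deciding whether it possesses a $d$-dimensional projective unitary representation whose adjoint is the standard module and whose generating line is equiangular. Controlling this representation-theoretic matching -- in particular ruling out the affine and almost-simple families of the wrong degree or lacking a representation of dimension exactly $\sqrt n$ -- is the technical heart of the argument, and it is here that the classification of finite simple groups is genuinely invoked.
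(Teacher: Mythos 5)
The first thing to note is that the paper does not prove Theorem~\ref{thm:3SIC} at all: it is quoted from \cite{Zhu15}, and the only related argument in the paper is the proof of the theorem immediately following it, which generalizes the triple-product computation of \cite[Lemma 5]{Zhu15} from SICs to arbitrary ETFs. Measured against that technique, the first half of your proposal (nonexistence of triply covariant SICs) is correct and is essentially the same argument in cleaner clothing: since $[v_k,v_\ell]=[P_k,P_\ell]$, your form satisfies $F(v_j,v_k,v_\ell)=\pm 2\,\mathrm{Im}\,\TP(j,k,\ell)$, so ``invariance plus antisymmetry kills $F$'' is exactly the statement that $3$-transitivity forces all triple products to be real. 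Your route to the contradiction is genuinely nicer than the counting identity \eqref{eq:tripcov1}--\eqref{eq:tripcov2}: because $n=d^2$, the $v_j$ span the traceless Hermitian matrices, hence $F\equiv 0$ by trilinearity, contradicting the Pauli witness; and this correctly isolates why simplices (which are triply covariant but have only $n=d+1$ vectors, too few to span) escape the argument.

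The genuine gap is in the second half, which is the actual content of the theorem's list. The reduction of $2$-covariance to a $2$-transitive permutation group of degree $d^2$ carrying a faithful $d$-dimensional projective unitary representation is fine, but the decisive step --- ``running through the classification of finite $2$-transitive groups leaves exactly three admissible configurations'' --- is asserted, not performed, and it is far from routine. You must exclude every affine $2$-transitive group of square degree other than the three surviving ones (for instance the degree-$16$ groups such as $2^4{:}A_7\leq \mathrm{AGL}(4,2)$, which are candidates for a doubly covariant SIC in $\bC^4$) and every almost-simple one of square degree not already killed by your first half (for instance $\mathrm{Sp}(6,2)$ in its $2$-transitive action on $36$ points, a candidate in $\bC^6$), in each case by showing either that no projective unitary representation of dimension $\sqrt{n}$ with the required adjoint decomposition exists, or that no orbit achieves equiangularity. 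No mechanism for these exclusions is given --- ``checking the equiangularity constraint'' is not operationalized --- and this case analysis is precisely where \cite{Zhu15} does its real work. A smaller soft spot: your parenthetical claim that a $2$-transitive orbit of $d^2$ lines is automatically tight presupposes irreducibility of the $d$-dimensional action; transitivity on lines alone does not force tightness (a reducible group can act transitively on a spanning, non-tight orbit of lines), though this is harmless for the three named examples since they are known SICs. As it stands, your proposal proves the first sentence of the theorem and gives a plausible strategy, but not a proof, for the classification.
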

In order to characterize triply covariant ETFs, we will make use of so-called triple products. 
\begin{defn} \cite{AFF11,ChWa16,FJKM17}
Let $\Phi = \{\varphi_j\}_{j=1}^n$ be an ETF for $\bC^d$. For $j, k, \ell \in \{1, \hdots, n\}$ we define the \emph{triple product} to be $\TP(j,k,\ell) = \ip{\varphi_j}{\varphi_k}\ip{\varphi_k}{\varphi_\ell}\ip{\varphi_\ell}{\varphi_j}.$
If all of the triple products of distinct $j,k,\ell$ are real and negative, then $\Phi$ is a \emph{simplex}.
\end{defn}
\begin{thm}
If an ETF $\Phi$ of $n$ vectors in $\bC^d$ is triply covariant, $d=1$, $n=d$, or $n=d+1$.  That is, the only non-trivial triply covariant ETFs are orthonormal bases and simplices.
\end{thm}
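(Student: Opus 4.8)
The plan is to exploit the fact that the triple product is a unitary invariant, convert triple covariance into the statement that \emph{all} triple products of distinct indices coincide, and then feed this into the tight-frame identity for the Gram matrix to pin down $n$ and $d$.

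First I would record that the triple product is invariant under the symmetry group. If $U \in G$ implements the permutation $\sigma$, then $U\varphi_j\varphi_j^\ast U^\ast = \varphi_{\sigma(j)}\varphi_{\sigma(j)}^\ast$ forces $U\varphi_j = \lambda_j \varphi_{\sigma(j)}$ for some unimodular $\lambda_j$. A short computation, using $|\lambda_j| = 1$ and the conjugate symmetry of the inner product, shows that the three unimodular prefactors appearing in $\TP(\sigma(j),\sigma(k),\sigma(\ell))$ cancel in conjugate pairs, so $\TP(\sigma(j),\sigma(k),\sigma(\ell)) = \TP(j,k,\ell)$. Since triple covariance means the induced permutation action is $3$-transitive, every ordered triple of distinct indices maps to every other, and hence all triple products of distinct indices equal a single common value $\beta$. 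Writing $\ip{\varphi_j}{\varphi_k} = \alpha\,\omega_{jk}$ with $|\omega_{jk}|=1$ and $\omega_{kj} = \overline{\omega_{jk}}$, we have $|\beta| = \alpha^3$; moreover $\TP(j,\ell,k) = \overline{\TP(j,k,\ell)}$ forces $\beta$ to be real, so $\beta = \pm\alpha^3$. (Throughout I assume $n \geq 3$ so that triples exist; the cases $n \leq 2$ land trivially in $d=1$ or $n=d$.)

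Next I would bring in the tight-frame identity. The Gram matrix $G$, with $G_{jj}=1$ and $G_{jk} = \alpha\omega_{jk}$, satisfies $G^2 = \tfrac{n}{d}G$. The diagonal entries reproduce the Welch bound $\alpha^2 = \tfrac{n-d}{d(n-1)}$ of Theorem~\ref{thm:Welch}. The off-diagonal entries are where the triple products enter: from the constancy of $\TP(j,m,k)$ one gets $\omega_{jm}\omega_{mk} = (\beta/\alpha^3)\,\omega_{jk}$ for every $m \neq j,k$, and expanding $(G^2)_{jk}$ gives
\[
\frac{n}{d}\,\alpha\,\omega_{jk} = (G^2)_{jk} = \Bigl(2\alpha + (n-2)\frac{\beta}{\alpha}\Bigr)\omega_{jk},
\qquad\text{hence}\qquad \beta = \frac{\alpha^2(n-2d)}{d(n-2)}.
\]

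Finally I would combine the two descriptions of $\beta$. Equating $\beta^2 = \alpha^6$ with the square of the displayed formula and substituting the Welch value of $\alpha^2$ reduces, after clearing denominators, to the polynomial identity $(d-1)\,n^2\,(n-d-1) = 0$. Since $n \neq 0$, this forces $d = 1$ or $n = d+1$; when $d \geq 2$ the realised sign is $\beta = -\alpha^3$, so the triple products are real and negative and the configuration is a simplex. The possibility $\alpha = 0$ excluded from the division above gives an orthonormal set that is a tight frame, i.e.\ an orthonormal basis with $n = d$. I expect the only genuine obstacle to be bookkeeping rather than ideas: correctly isolating the off-diagonal sum $\sum_{m\neq j,k}\omega_{jm}\omega_{mk}$ in terms of $\beta$, and making sure the degenerate endpoints (namely $\alpha = 0$, $n \leq 2$, and the $n=2$ case where the denominator vanishes) are all swept into the three stated outcomes.
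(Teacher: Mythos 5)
Your proposal is correct and follows essentially the same route as the paper: triple covariance forces all triple products of distinct indices to be equal and real, the tight-frame identity then produces the identical polynomial $(d-1)\,n^2\,(n-d-1)=0$, and the excluded $\alpha=0$ case is swept into the orthonormal-basis conclusion. The only cosmetic difference is that you read the identity off the off-diagonal entries of $G^2=\tfrac{n}{d}G$ after isolating the phase relation $\omega_{jm}\omega_{mk}=(\beta/\alpha^3)\omega_{jk}$, whereas the paper sums $\TP(j,k,\ell)$ over $\ell$ and applies $\sum_\ell \varphi_\ell\varphi_\ell^\ast=\tfrac{n}{d}I$ --- the same computation organized differently.
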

\begin{proof} We assume that $n \geq 3$ and generalize the proof of Theorem~\ref{thm:3SIC} found in \cite[Lemma 5]{Zhu15}.  Let $\Phi=\{ \varphi_j\}_{j=1}^n$ be a triply covariant ETF for $\bC^d$. Then all of the triple products (of distinct vectors) must be equal.  Since for all $j \neq k$, $\TP(j,k,\ell) = \overline{\TP(k,j,\ell)}$,
all of the triple products (of distinct vectors) must be real.  We fix $j \neq k$ and note that by the Welch bound (Theorem~\ref{thm:Welch})
\begin{align}
\lefteqn{\sum_{\ell=1}^n \TP(j,k,\ell)= \left( \sum_{\ell \notin\{j,k\}} +  \sum_{\ell\in\{j,k\}} \right) \TP(j,k,\ell)}\nonumber\\
&= \pm(n-2) \left(\sqrt{\frac{n-d}{d(n-1)}}\right)^3 +2 \left(\sqrt{\frac{n-d}{d(n-1)}}\right)^2, \label{eq:tripcov1}
\end{align}
and
\begin{align}
\lefteqn{\sum_{\ell=1}^n \TP(j,k,\ell) = \sum_{\ell=1}^n\overline{ \TP(k,j,\ell)}}\nonumber\\
&=\sum_{\ell=1}^n \overline{\tr(\varphi_j^\ast\varphi_k \varphi_k^\ast\varphi_\ell\varphi_\ell^\ast\varphi_j ) }=\overline{\tr(\varphi_j\varphi_j^\ast\varphi_k \varphi_k^\ast \sum_{\ell=1}^n \varphi_\ell\varphi_\ell^\ast ) }\nonumber\\
&= \frac{n}{d} \absip{\varphi_j}{\varphi_k}^2 = \frac{n}{d} \left(\sqrt{\frac{n-d}{d(n-1)}}\right)^2.\label{eq:tripcov2}
\end{align}
We set \eqref{eq:tripcov1} and \eqref{eq:tripcov2} to be equal. Then either the ETF is an orthonormal basis or one may divide each side by $(n-d)/((d(n-1))$.  In the latter case, one obtains the equation $0 = (1-d)n^2(n-d-1)$,
yielding the nonsense solution $n=0$, the trivial solution $d=1$, and the solution $n=d+1$, where all ETFs of $d+1$ vectors in $\bC^d$ are simplices~\cite{FJKM17}.
\end{proof}

For doubly transitive ETFs, we have the following result, generalizing Lemma 8 in \cite{Zhu15}.
\begin{prop}
Let $\Phi = \{\varphi_j \}_{j=1}^n$ be a doubly transitive ETF for $\bC^d$ with $n>d$. Then for all $j \neq k \neq \ell$, there exists a $2n$th root of unity $\zeta_{j,k,\ell}$ such that
\[
\TP(j,k,\ell) = \zeta_{j,k,\ell} \left(\frac{n-d}{d(n-1)}\right)^{3/2}.
\]
\end{prop}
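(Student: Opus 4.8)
The plan is to reduce the statement to a claim about the unimodular phase $\zeta_{j,k,\ell} := \TP(j,k,\ell)/\alpha^3$, where $\alpha = \sqrt{(n-d)/(d(n-1))}$ is the common angle, and then to prove that this phase is a root of unity whose order divides $2n$. Since $\Phi$ is an ETF, each of the three inner products has modulus $\alpha$, so $\abs{\TP(j,k,\ell)} = \alpha^3$ and $\zeta_{j,k,\ell}$ is automatically unimodular; only the two arithmetic assertions (root of unity, order dividing $2n$) require work.

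First I would record the single place where group covariance enters directly, namely that triple products are invariant under the symmetry group. If $U \in G$ realizes the permutation $\sigma$, then $U\varphi_i = \lambda_i \varphi_{\sigma(i)}$ for unimodular scalars $\lambda_i$, and substituting into $\TP(j,k,\ell) = \ip{\varphi_j}{\varphi_k}\ip{\varphi_k}{\varphi_\ell}\ip{\varphi_\ell}{\varphi_j}$ while using that $U$ is unitary, the six phase factors telescope to $1$. Hence $\TP(j,k,\ell) = \TP(\sigma(j),\sigma(k),\sigma(\ell))$ for every $\sigma \in \overline{G}$, so $\zeta_{j,k,\ell}$ depends only on the $\overline{G}$-orbit of the ordered triple $(j,k,\ell)$.

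Next I would exploit double transitivity to homogenize the phases and to extract algebraic constraints. Because $\overline{G}$ is $2$-transitive, every ordered pair can be carried to every other, so the multiset $\{\zeta_{j,k,\ell} : \ell \notin \{j,k\}\}$ is the same for all pairs $(j,k)$; moreover a group element swapping $j$ and $k$ shows this set is closed under complex conjugation. The tight-frame identity used in \eqref{eq:tripcov2}, applied with the two terms $\ell \in \{j,k\}$ separated off, yields the sum rule $\sum_{\ell \notin \{j,k\}} \zeta_{j,k,\ell} = (n/d - 2)/\alpha$. These relations constrain the phases but do not by themselves force them to be roots of unity.

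The crux, and the step I expect to be the main obstacle, is the arithmetic. The strategy is to argue that each $\zeta_{j,k,\ell}$ is an algebraic integer all of whose Galois conjugates are again unimodular, and then to invoke Kronecker's theorem (an algebraic integer whose conjugates all lie on the unit circle is a root of unity). Unimodularity of the conjugates should follow from the fact that a Galois automorphism carries the group-covariant ETF to another ETF with the same parameters, sending triple products to triple products, all of modulus $\alpha^3$; the delicate points are establishing integrality after the $\alpha^{-3}$ normalization and verifying that the angle structure is Galois-stable. Finally, to pin the order at $2n$ I would show that the phases lie in the cyclotomic field $\bQ(\mu_n)$ -- this is where the degree-$n$, $2$-transitive covariance must be used, forcing the Gram phases into the $n$th cyclotomic field -- and then use that the roots of unity in $\bQ(\mu_n)$ are contained in $\mu_{2n}$. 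Combining Kronecker's theorem with this field bound gives $\zeta_{j,k,\ell} \in \mu_{2n}$, which is the assertion; the integrality and the field-of-definition bound are the parts I would expect to demand the most care.
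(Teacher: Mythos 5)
Your setup matches the paper's: the normalization $\widetilde{\TP}(j,k,\ell) = \TP(j,k,\ell)/\abs{\TP(j,k,\ell)}$, the invariance of triple products under the symmetry group, and the observation that double transitivity makes the multiset of phases over a fixed pair conjugation-closed are all correct and are exactly how the paper's proof begins. But your proposal then stops being a proof: the entire arithmetic content (that the phases are roots of unity of order dividing $2n$) is routed through Kronecker's theorem plus a field-of-definition bound, and you explicitly leave the required hypotheses --- integrality of $\zeta_{j,k,\ell}$, Galois-stability of unimodularity, and membership in $\bQ(\mu_n)$ --- as ``delicate points'' you expect to be obstacles. These are not small gaps. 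Kronecker's theorem needs an algebraic \emph{integer}, and there is no reason a priori that $\TP(j,k,\ell)/\alpha^3$ is one; worse, you have not even established that the Gram entries of an arbitrary doubly transitive ETF are algebraic numbers, which is needed before any Galois argument can start. The claim that double transitivity ``forces the Gram phases into the $n$th cyclotomic field'' is asserted, not argued, and proving it is essentially as hard as the statement itself (it is close to the Iverson--Mixon roux machinery). So the proposal, as written, does not prove the proposition.

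The idea you are missing is elementary and purely multiplicative. For any $j,k,\ell,m$ one has the cocycle identity
\begin{equation*}
\widetilde{\TP}(j,k,\ell) = \widetilde{\TP}(m,j,k)\,\widetilde{\TP}(m,k,\ell)\,\widetilde{\TP}(m,\ell,j),
\end{equation*}
which holds because the cross terms pair into squared moduli $\ip{\varphi_a}{\varphi_m}\ip{\varphi_m}{\varphi_a} = \absip{\varphi_m}{\varphi_a}^2 > 0$ and so cancel at the level of phases. You already observed (via double transitivity and sesquilinearity) that the multiset $\left\{\widetilde{\TP}(m,j,k) : m \in \{1,\hdots,n\}\right\}$ is closed under complex conjugation; the consequence you should draw from that is not a sum rule but a \emph{product} rule: $\prod_{m=1}^n \widetilde{\TP}(m,j,k) = \pm 1$, since the product equals its own conjugate and is unimodular. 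Now take the product of the cocycle identity over all $m \in \{1,\hdots,n\}$: the left side is $\widetilde{\TP}(j,k,\ell)^n$ and the right side is a product of three such $\pm 1$'s, giving $\widetilde{\TP}(j,k,\ell)^n = \pm 1$, i.e., $\widetilde{\TP}(j,k,\ell)^{2n} = 1$. Combined with $\abs{\TP(j,k,\ell)} = \left((n-d)/(d(n-1))\right)^{3/2}$ from equiangularity, this is the proposition --- with no algebraicity, integrality, or cyclotomic field input at all.
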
 
\begin{proof}
For $j, k, \ell$, we set 
\[
\widetilde{\TP}(j,k,\ell) =\TP(j,k,\ell) / \abs{\TP(j,k,\ell)}.
\]
Fix $j,k \in \{1,\hdots, n\}$ with $j \neq k$.  The double transitivity yields that the multisets
\begin{align*}
&\left\{\widetilde{\TP}(m,j,k) : m \in \{1, \hdots, n\} \right\}\enskip \textrm{and}\\
&\left\{\widetilde{\TP}(m,k,j) : m \in \{1, \hdots, n\} \right\}
\end{align*}
are identical.  However, the sesquilinearity of the inner product yields that the elements of the two multisets are conjugates of each other.  Since they are conjugate invariant,
\[
\prod_{m=1}^n \widetilde{\TP}(m,j,k) = \pm 1
\]
where the sign is independent of choice of (distinct) $j$ and $k$.  We further note that for any $j,k,\ell,m$,
\begin{equation}\label{eqn:lemmTP}
\widetilde{\TP}(j,k,\ell) = \widetilde{\TP}(m,j,k)  \widetilde{\TP}(m,k,\ell) \widetilde{\TP}(m,\ell,j). 
\end{equation}
By taking the product of~\eqref{eqn:lemmTP} over all $m \in \{1, \hdots, n\}$, we obtain $\widetilde{\TP}(j,k,\ell)^n = \pm 1$.  Since for distinct $j,k,\ell$, 
\[
\widetilde{\TP}(j,k,\ell) = \left(\frac{d(n-1)}{n-d}\right)^{3/2}\TP(j,k,\ell),
\]
the lemma follows.
\end{proof}

\section{Roux Lines}\label{sec:drackn}
Equivalence classes of real equiangular tight frames are known to be in one-to-one correspondence to combinatorial objects known as regular two-graphs~\cite{Sei76,HoPa04}. The correspondence is related to the fact that the inner products of equiangular vectors in real Euclidean space take one of two values based on their sign and these values can be thought of determining adjacency.  Since the inner products of equiangular vectors in complex space could have infinitely many phases, the situation in complex space is more complicated.  In~\cite{IvMi18}, a complex analogue of regular two-graphs the authors call \emph{roux} is developed by requiring that the Gram matrix of the vectors satisfy certain axioms concerning association schemes. Unlike in the real case, not all complex equiangular tight frames yield roux lines. All doubly transitive ETFs are roux lines. We will prove that certain Gabor-Steiner equiangular tight frames correspond to roux lines.

We begin by defining the class of ETFs, Gabor-Steiner ETFs~\cite{BoKi18}, which we would like to analyze. Like SICs, these are generated by the orbit of a single vector under a projective unitary representation of a Weyl-Heisenberg-like group; however, except for the case $m=3$,  Gabor-Steiner ETFs are not SICs. 
\begin{defn}
Let $m \geq 2$ be an integer and $\zeta_m \in \bC$ a primitive $m$th root of unity. We denote the $m \times m$ identity matrix by $I_m$. Furthermore, we define the \emph{(cyclic) translation} $T_m$ and \emph{modulation} $M_m$ operators as
\[
T_m = (\operatorname{circ}(0,1, 0, \hdots, 0),\, M_m =\diag( 1 ,\hdots , \zeta_m^{m-1}).
\]
Further, if $m=(m_0, m_1, \hdots, m_s)$ is a vector of integers $\geq 3$, the group of translations over $\bigoplus_{\ell=0}^s \bZ_{m_\ell}$ is
\[
\left\{T_m^{(k)} :=  \bigotimes_{\ell=0}^s T_{m_\ell}^{k_\ell}: k=(k_0, \hdots k_s) \in \bigoplus_{\ell=0}^s \bZ_{m_\ell}\right\},
\]
where $\otimes$ is the Kronecker product.  Similarly, the group of modulations is
\[
\left\{M_m^{(\kappa)}:= \bigotimes_{\ell=0}^s M_{m_\ell}^{\kappa_\ell}: \kappa=(\kappa_0, \hdots \kappa_s) \in \bigoplus_{\ell=0}^s \bZ_{m_\ell}\right\}.
\]
If further each $m_\ell$ is odd, we define the projective unitary representation $\pi$ on $\bigoplus_{\ell=0}^s \bZ_{m_\ell} \times \bigoplus_{\ell=0}^s \bZ_{m_\ell}$ as 
\[
\pi(k, \kappa) = I_{(\abs{m}-1)/2} \otimes \left( M_{m}^{(\kappa)}T^{(k)}_{m}\right).
\] 
\end{defn}
\begin{thm}\cite{BoKi18}
Let $m=(m_0, \hdots, m_s)$ be a vector of odd integers $\geq 3$ and set $\abs{m} = \prod_{\ell=0}^s m_\ell$.

Let
\[
\mathcal{I}=\left\{ (0, \hdots, 0) , \hdots, ((m_0-1)/2, \hdots, (m_s-3)/2)\right\},
\]
which is the set of the first $(\abs{m}-1)/2$ elements of $\bigoplus_{\ell=0}^s \bZ_{m_\ell}$, ordered lexicographically. For $i \in \cI$ define
\[
\left(\left( \phi_{i}\right)_{j}\right)_j=\left(\left\{ \begin{array}{lr} 1; & j=i\\ -1; &j= m-i-\mathbbm{1}\\ 0; & \textrm{o.w.}\end{array}\right.\right)_j\in \bC^{\abs{m}},
\]
where $\mathbbm{1}$ is the all-ones vector of length $\abs{m}$, and $\psi$ to be the block vector in $\bC^{\abs{m}(\abs{m}-1)/2}$ consisting of the $\phi_i$ stacked vertically. We finally define $\cG(m)$ to be the orbit of $\psi$ under $\pi(\bigoplus_{\ell=0}^s \bZ_{m_\ell} \times \bigoplus_{\ell=0}^s \bZ_{m_\ell})$.   Then $\cG(m)$ is an ETF called a {\it Gabor-Steiner ETF}; 
\end{thm}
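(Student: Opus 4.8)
The plan is to verify the three ETF axioms directly, after normalizing. Since each $\pi(k,\kappa)$ is unitary every orbit vector has the same norm as $\psi$, and because each $\phi_i$ has exactly the two nonzero entries $\pm 1$ we get $\norm{\psi}^2=\abs{m}-1$; so I would work with the unit vectors $\widetilde\psi=\psi/\sqrt{\abs{m}-1}$ and $\varphi_{(k,\kappa)}=\pi(k,\kappa)\widetilde\psi$. The one structural fact driving everything is that the supports of the $\phi_i$ are pairwise disjoint: the map $i\mapsto m-i-\mathbbm{1}$ is an involution on $\bigoplus_{\ell}\bZ_{m_\ell}$ whose unique fixed point is the ``center'' $c=(m-\mathbbm{1})/2$ (each $m_\ell$ odd is essential here), and $\cI$ is by construction exactly one representative from each two-element orbit $\{i,m-i-\mathbbm{1}\}$, with $c$ omitted. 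Hence $\ip{\phi_i}{\phi_{i'}}=2\,\delta_{i,i'}$, and the blocks $\{i,\,m-i-\mathbbm{1}\}$, $i\in\cI$, partition all indices except $c$.

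For the tight-frame property I would compute the frame operator $S=\sum_{(k,\kappa)}\varphi_{(k,\kappa)}\varphi_{(k,\kappa)}^\ast$. Writing $\psi=\sum_{i\in\cI}e_i\otimes\phi_i$ (with $e_i$ the standard basis indexing the blocks) and $\pi(k,\kappa)=I\otimes U_{(k,\kappa)}$ where $U_{(k,\kappa)}=M_m^{(\kappa)}T_m^{(k)}$, this becomes $\tfrac{1}{\abs{m}-1}\sum_{i,i'}e_ie_{i'}^\ast\otimes\big(\sum_{(k,\kappa)}U_{(k,\kappa)}\phi_i\phi_{i'}^\ast U_{(k,\kappa)}^\ast\big)$. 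The inner sum is the Weyl--Heisenberg twirl of a rank-one operator; since the $\abs{m}^2$ operators $U_{(k,\kappa)}$ form an orthogonal basis of the $\abs{m}\times\abs{m}$ matrices (a nice error basis), the twirl collapses to $\abs{m}\,\tr(\phi_i\phi_{i'}^\ast)\,I=\abs{m}\,\ip{\phi_i}{\phi_{i'}}\,I$. Plugging in $\ip{\phi_i}{\phi_{i'}}=2\delta_{i,i'}$ yields $S=\tfrac{2\abs{m}}{\abs{m}-1}\,I$, a scalar, which is precisely property 1) once I confirm $n/d=2\abs{m}/(\abs{m}-1)$ below.

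Equiangularity is the heart of the argument and I expect it to be the main obstacle. By covariance, $\absip{\varphi_{(k,\kappa)}}{\varphi_{(k',\kappa')}}=\absip{\pi(a,b)\widetilde\psi}{\widetilde\psi}$ with $(a,b)=(k-k',\kappa-\kappa')$, so it suffices to show $\absip{\pi(a,b)\psi}{\psi}=1$ for every $(a,b)\neq(0,0)$. Expanding gives $\ip{\pi(a,b)\psi}{\psi}=\sum_{i\in\cI}\sum_j\overline{\phi_i[j]}\,\chi_b(j)\,\phi_i[j-a]$, where $\chi_b(j)=\prod_\ell\zeta_{m_\ell}^{b_\ell j_\ell}$ records the modulation and $j\mapsto j-a$ the translation. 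For $a=0$ the inner expression is $\chi_b(i)+\chi_b(m-i-\mathbbm{1})$, and summing over $\cI$ collapses, via the partition above and the character orthogonality $\sum_j\chi_b(j)=\abs{m}\,\delta_{b,0}$, to $\sum_{j\neq c}\chi_b(j)=-\chi_b(c)$ when $b\neq0$, of modulus $1$. For $a\neq0$ the support of $\phi_i$ and of its $a$-translate overlap only when $a=\pm(2i+\mathbbm{1})$, i.e.\ for the single index determined by $a$ (solvable since $2$ is invertible modulo each odd $m_\ell$); exactly one of the two candidate indices lies in $\cI$, and the surviving term is again a single $-\chi_b(\,\cdot\,)$ of modulus $1$. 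Thus $\absip{\varphi_{(k,\kappa)}}{\varphi_{(k',\kappa')}}=\tfrac{1}{\abs{m}-1}$ for all distinct pairs.

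Finally, since this common modulus is strictly less than $1$, distinct group elements yield distinct lines, so $n=\abs{m}^2$ while $d=\abs{m}(\abs{m}-1)/2$; one checks $n/d=2\abs{m}/(\abs{m}-1)$, matching the frame-operator constant, and that $\sqrt{(n-d)/(d(n-1))}=1/(\abs{m}-1)$ equals the computed angle, so the Welch bound of Theorem~\ref{thm:Welch} is saturated. All three ETF axioms hold, and $\cG(m)$ is an ETF. The delicate points to get right are the case split $a=0$ versus $a\neq0$ and the bookkeeping that makes every off-diagonal sum collapse to one unimodular character value; the oddness of the $m_\ell$ is used repeatedly, both for the fixed point of the involution and for the invertibility of $2$.
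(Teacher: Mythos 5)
Your proof is correct, but note that the paper itself offers no proof to compare against: the theorem is quoted verbatim from \cite{BoKi18}, and the only ingredient of its proof that surfaces in this paper is the explicit Gram matrix of $\cG(m)$ imported from \cite[Lemma 5.1]{BoKi18} in the proposition giving the normalized signature matrix. Relative to that route, your argument is genuinely different and self-contained. Where \cite{BoKi18} establishes the ETF property by computing every inner product $\ip{\pi(k,\kappa)\psi}{\pi(\tilde{k},\tilde{\kappa})\psi}$ in closed form (obtaining the unimodular values $-\prod_\ell \zeta_{m_\ell}^{(\kappa_\ell-\tilde{\kappa}_\ell)(\tilde{k}_\ell+k_\ell-1)/2}$), you split the work: tightness follows conceptually from irreducibility of the Weyl--Heisenberg projective representation (the twirl $\sum_{(k,\kappa)} U A U^\ast = \abs{m}\tr(A)I$ applied blockwise, together with $\ip{\phi_i}{\phi_{i'}}=2\delta_{i,i'}$), and equiangularity follows from the support-overlap analysis, which leaves a single surviving term $-\chi_b\bigl((a-\mathbbm{1})/2\bigr)$ of modulus one. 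It is worth observing that this surviving term is exactly the closed-form entry quoted in the signature-matrix proposition (specialize $(\tilde{k},\tilde{\kappa})=(0,0)$ there), so your computation is consistent with, and in fact reproves, the case of \cite[Lemma 5.1]{BoKi18} that the present paper actually uses. Your approach buys a cleaner structural reason for tightness; the closed-form Gram matrix buys the phase information (not just the modulus) that the paper needs later for the roux-lines argument, where knowing only $\absip{\cdot}{\cdot}$ would not suffice. The delicate points are all handled correctly in your sketch: the involution $i \mapsto m-i-\mathbbm{1}$ has the unique fixed point $c=(m-\mathbbm{1})/2$ because each $m_\ell$ is odd, $\cI$ is a transversal of the two-element orbits since the involution reverses lexicographic order, invertibility of $2$ modulo each odd $m_\ell$ pins down the two candidate indices for $a\neq 0$, and exactly one of them lies in $\cI$ because they form an orbit of the involution.
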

Gabor-Steiner ETFs span the same set of lines as the ETFs in~\cite{BoEl10a,IJM17}. We will make use of signature matrices and their characterization of ETFs (see, e.g., \cite{LS1973,HoPa04}).
\begin{defn}
Let $\Phi$ be an ETF of vectors of norm $\nu$ and absolute inner product value $\alpha>0$. The \emph{signature matrix} $S$ (also called \emph{Seidel matrix}) of $\Phi$ is defined to be $S = (\Phi^\ast \Phi- \nu^2 I)/\alpha$.
If $\overline{\Phi}$ is switching equivalent to $\Phi$ (i.e., spans the same set of lines) and has signature matrix $\overline{S}$, where the entries in the first row and column with the exception of the diagonal element are equal to one, then $\overline{S}$ is a \emph{normalized signature matrix} of $\Phi$.
\end{defn}
\begin{prop}\label{prop:sign}
Let $\Phi$ be an equiangular tight frame of $n$ vectors in $\bC^d$ with signature matrix $S$.  Then the following hold true.
\begin{enumerate}
\item[(i)] $S \in \SYM_n(\bC)$;
\item[(ii)] The diagonal entries of $S$ are all zero;
\item[(iii)] The off-diagonal entires of $S$ are unimodular;
\item[(iv)] $S$ has two unique eigenvalues; and
\item[(v)] The larger eigenvalue of $S$ has multiplicity $d$.
\end{enumerate}
Further, if a matrix $S$ satisfies (i)--(v), then there exists an equiangular tight frame $\Phi$ of $n$ vectors of norm $\nu$ and absolute inner product value $\alpha$ in $\bC^d$ such that $S = (\Phi^\ast \Phi- \nu^2 I)/\alpha$.
\end{prop}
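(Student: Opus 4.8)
The plan is to verify properties (i)--(v) directly from the definition $S = (\Phi^\ast \Phi - \nu^2 I)/\alpha$ together with the three ETF axioms, and then to reverse this construction for the converse. Throughout I regard $\Phi$ as the $d \times n$ synthesis matrix whose $j$th column is $\varphi_j$, so that $\Phi^\ast \Phi$ is the $n \times n$ Gram matrix with $(\Phi^\ast\Phi)_{jk} = \ip{\varphi_k}{\varphi_j}$.

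For the forward direction, properties (i)--(iii) are essentially immediate. Since $\Phi^\ast\Phi$ is self-adjoint and $\nu^2 I$ is real, $S \in \SYM_n(\bC)$, giving (i). The equal-norm axiom gives $(\Phi^\ast\Phi)_{jj} = \norm{\varphi_j}^2 = \nu^2$, so the diagonal of $S$ vanishes, giving (ii). Equiangularity gives $\abs{(\Phi^\ast\Phi)_{jk}} = \alpha$ for $j \neq k$, so after dividing by $\alpha$ the off-diagonal entries are unimodular, giving (iii). For (iv) and (v) I would invoke the tight-frame axiom in the form $\Phi\Phi^\ast = \frac{n\nu^2}{d} I_d$. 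Since $\Phi\Phi^\ast$ and $\Phi^\ast\Phi$ share the same nonzero eigenvalues, $\Phi^\ast\Phi$ has eigenvalue $\frac{n\nu^2}{d}$ with multiplicity $d$ and eigenvalue $0$ with multiplicity $n-d$. Hence $S$ has exactly the two eigenvalues $(\frac{n\nu^2}{d} - \nu^2)/\alpha = \frac{(n-d)\nu^2}{d\alpha}$ and $-\nu^2/\alpha$; the former is positive (as $n > d$) and carries multiplicity $d$, establishing (iv) and (v).

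For the converse, suppose $S$ satisfies (i)--(v), with larger eigenvalue $\lambda_+$ of multiplicity $d$ and smaller eigenvalue $\lambda_-$ of multiplicity $n-d$. Because the diagonal of $S$ is zero, $\tr S = d\lambda_+ + (n-d)\lambda_- = 0$, which forces $\lambda_- = -\frac{d}{n-d}\lambda_+$; combined with $\lambda_+ > \lambda_-$ this yields $\lambda_+ > 0 > \lambda_-$. I would then fix any $\alpha > 0$, set $\nu^2 = -\alpha\lambda_- > 0$, and define the candidate Gram matrix $G = \nu^2 I + \alpha S$. Its eigenvalues are $\nu^2 + \alpha\lambda_+ = \alpha(\lambda_+ - \lambda_-) > 0$ with multiplicity $d$ and $\nu^2 + \alpha\lambda_- = 0$ with multiplicity $n-d$, so $G$ is positive semidefinite of rank exactly $d$.

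The final step is to factor $G$. Being self-adjoint, positive semidefinite, and of rank $d$, its spectral decomposition yields a $d \times n$ matrix $\Phi$ with $\Phi^\ast\Phi = G$, equivalently $S = (\Phi^\ast\Phi - \nu^2 I)/\alpha$. It then remains to check that the columns of $\Phi$ form an ETF: the diagonal $(\Phi^\ast\Phi)_{jj} = \nu^2$ gives equal norms $\nu$; the unimodularity (iii) gives $\absip{\varphi_j}{\varphi_k} = \alpha$ for $j \neq k$, giving equiangularity; and since $\Phi\Phi^\ast$ shares the nonzero spectrum of $G$, it equals $\alpha(\lambda_+ - \lambda_-)I_d$, which is precisely the tight-frame condition. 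I expect the main subtlety to lie in this converse: one must use the trace-zero consequence of (ii) to pin down the sign of $\lambda_-$ and hence select the scaling $\nu^2 = -\alpha\lambda_-$ that makes $G$ positive semidefinite of the correct rank, since it is this single normalization that simultaneously secures the equal-norm and tightness conditions.
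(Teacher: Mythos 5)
Your proof is correct. Note that the paper itself offers no proof of this proposition --- it is stated as a known characterization, with the references (Lemmens--Seidel and Holmes--Paulsen) cited just before the definition of the signature matrix --- and your argument (reading off (i)--(iii) from the Gram matrix, getting (iv)--(v) from the fact that $\Phi\Phi^\ast = \tfrac{n\nu^2}{d}I_d$ shares its nonzero spectrum with $\Phi^\ast\Phi$, and reversing the construction via the trace-zero identity $d\lambda_+ + (n-d)\lambda_- = 0$ and a rank-$d$ positive semidefinite factorization) is exactly the standard proof found in that literature, including the key normalization $\nu^2 = -\alpha\lambda_-$ that makes the converse work.
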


\begin{prop}\label{prop:sign}
Let $m=(m_0, m_1, \hdots, m_s)$ be a vector of odd integers $\geq 3$.  Define
\[
\overline{S} = \left( s_{(k,\kappa),(\tilde{k},\tilde{\kappa})}\right)_{(\tilde{k},\tilde{\kappa}),(k,\kappa) \in \left(\bigoplus_{\ell=0}^s \bZ_{m_\ell} \times \bigoplus_{\ell=0}^s \bZ_{m_\ell}\right)},
\]
where 
\[
s_{(k,\kappa),(\tilde{k},\tilde{\kappa})} = -\prod_{\ell=0}^s \zeta_{m_\ell}^{(\kappa_\ell \tilde{k}_\ell - \tilde{\kappa}_\ell k_\ell)/2}
\]
when $(\tilde{k},\tilde{\kappa})$, $(k,\kappa)$, and $(0,0)$ are distinct and $s_{(k,\kappa),(\tilde{k},\tilde{\kappa})} = 1-\delta_{(k,\kappa),(\tilde{k},\tilde{\kappa})}$ otherwise.
Then $\overline{S}$ is a normalized signature matrix of $\cG(m)$.
\end{prop}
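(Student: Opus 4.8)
The plan is to compute the Gram-type matrix $\Phi^\ast\Phi$ of $\cG(m)$ directly from the group action, read off its unnormalized signature matrix, and then apply a single switching (conjugation by a diagonal unitary) to force the row and column indexed by $(0,0)$ to equal $1$. Write $\varphi_{(k,\kappa)}=\pi(k,\kappa)\psi$ for the frame vector indexed by $(k,\kappa)$. Since $\cG(m)$ is an ETF, the off-diagonal entries of its signature matrix are the inner products $\ip{\varphi_{(k,\kappa)}}{\varphi_{(\tilde k,\tilde\kappa)}}$ divided by their common modulus $\alpha$, with $\nu^2=\norm{\psi}^2=\abs{m}-1$ on the subtracted diagonal. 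So everything reduces to evaluating these inner products and checking that they are unimodular, which will show $\alpha=1$.

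First I would reduce each inner product to an autocorrelation of $\psi$. Writing $\ip{\varphi_{(k,\kappa)}}{\varphi_{(\tilde k,\tilde\kappa)}}=\ip{\psi}{\pi(k,\kappa)^\ast\pi(\tilde k,\tilde\kappa)\psi}$ and applying the Weyl--Heisenberg commutation relation $M_{m_\ell}T_{m_\ell}=\zeta_{m_\ell}^{\pm1}T_{m_\ell}M_{m_\ell}$ in each tensor factor, one rewrites $\pi(k,\kappa)^\ast\pi(\tilde k,\tilde\kappa)$ as an explicit phase $\prod_\ell\zeta_{m_\ell}^{k_\ell(\kappa_\ell-\tilde\kappa_\ell)}$ times $\pi(\tilde k-k,\tilde\kappa-\kappa)$. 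This leaves a single autocorrelation $\ip{\psi}{\pi(a,b)\psi}$ with $(a,b)=(\tilde k-k,\tilde\kappa-\kappa)$ to be understood.

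The crux is this autocorrelation. Because $\psi$ is the vertical stack of the $\phi_i$ and $\pi(a,b)=I_{(\abs{m}-1)/2}\otimes\big(M_m^{(b)}T_m^{(a)}\big)$ acts block-diagonally, $\ip{\psi}{\pi(a,b)\psi}=\sum_{i\in\cI}\ip{\phi_i}{M_m^{(b)}T_m^{(a)}\phi_i}$. Each $\phi_i$ is supported on the two indices $i$ and $m-i-\mathbbm{1}$, so the translate $T_m^{(a)}\phi_i$ overlaps $\phi_i$ only when $a$ equals $0$ or $\pm\big(i-(m-i-\mathbbm{1})\big)$. Here the oddness of each $m_\ell$ is essential: it makes $2$ invertible modulo $m_\ell$, so ``$/2$'' in the exponents is literally multiplication by $2^{-1}$, and for each fixed $a$ the overlap condition has a unique solution $i$ together with its complement $m-i-\mathbbm{1}$, exactly one of which lies in $\cI$. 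Summing the surviving modulation phases collapses the sum to $-\prod_\ell\zeta_{m_\ell}^{-b_\ell(a_\ell+1)/2}$ for $(a,b)\neq(0,0)$ (and to $\abs{m}-1$ when $(a,b)=(0,0)$). In particular every off-diagonal inner product is unimodular, confirming $\alpha=1$, and back-substitution gives $\ip{\varphi_{(k,\kappa)}}{\varphi_{(\tilde k,\tilde\kappa)}}=-\prod_\ell\zeta_{m_\ell}^{-(\tilde\kappa_\ell-\kappa_\ell)(k_\ell+\tilde k_\ell+1)/2}$.

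Finally I would normalize. Switching equivalence multiplies each $\varphi_{(k,\kappa)}$ by a unimodular scalar, which leaves the spanned lines unchanged and conjugates the signature matrix by the corresponding diagonal unitary. Choosing the scalars so that the row and column indexed by $(0,0)$ become $1$ --- concretely $c_{(k,\kappa)}=-\prod_\ell\zeta_{m_\ell}^{\kappa_\ell(k_\ell+1)/2}$ for $(k,\kappa)\neq(0,0)$ and $c_{(0,0)}=1$ --- the general off-diagonal entry becomes $\overline{c_{(k,\kappa)}}\,c_{(\tilde k,\tilde\kappa)}\ip{\varphi_{(k,\kappa)}}{\varphi_{(\tilde k,\tilde\kappa)}}$, and the three half-integer exponents combine to $(\kappa_\ell\tilde k_\ell-\tilde\kappa_\ell k_\ell)/2$, matching $s_{(k,\kappa),(\tilde k,\tilde\kappa)}$ exactly. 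The diagonal entries are $0$ and the first row and column entries are $1$ by construction, which is precisely the ``$1-\delta$'' case of the claimed formula. The main obstacle is the autocorrelation step: the bookkeeping of the complement map $i\mapsto m-i-\mathbbm{1}$, the role of $2^{-1}\bmod m_\ell$, and keeping the shift-direction and commutation-sign conventions consistent so that the exponents land correctly; once that symbol is in hand, the reduction and the final cancellation are routine.
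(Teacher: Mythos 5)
Your proof is correct, and at the structural level it follows the same route as the paper's: obtain the unnormalized signature matrix $S=\Phi^\ast\Phi-(\abs{m}-1)I$ of $\cG(m)$, then apply an explicit switching to force the $(0,0)$ row and column to be all ones. The genuine difference is in how the Gram entries are obtained: the paper does not compute them at all, but simply cites \cite[Lemma 5.1]{BoKi18} for the formula $-\prod_{\ell}\zeta_{m_\ell}^{(\kappa_\ell-\tilde{\kappa}_\ell)(\tilde{k}_\ell+k_\ell-1)/2}$, whereas you re-derive them from scratch via the Weyl--Heisenberg commutation relation and the two-point support analysis of the blocks $\phi_i$ of $\psi$ (including the correct use of $2^{-1}\bmod m_\ell$ and the fact that $\cI$ meets each complementary pair $\{i,\,m-i-\mathbbm{1}\}$ exactly once). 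So your argument is self-contained where the paper's rests on an external lemma. One point worth flagging: your intermediate Gram formula, with exponent $(\kappa_\ell-\tilde{\kappa}_\ell)(k_\ell+\tilde{k}_\ell+1)/2$, differs from the cited one by the factor $\prod_\ell\zeta_{m_\ell}^{\kappa_\ell-\tilde{\kappa}_\ell}$, which is a diagonal (switching) conjugation with phases $u_{(k,\kappa)}=\prod_\ell\zeta_{m_\ell}^{\kappa_\ell}$; this is exactly the translation-direction ambiguity you acknowledged, and it is harmless because a normalized signature matrix is unique within a switching class (any diagonal conjugation preserving an all-ones first row and column must be scalar), so both conventions terminate at the same $\overline{S}$. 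Your final cancellation is also right: with $c_{(k,\kappa)}=-\prod_\ell\zeta_{m_\ell}^{\kappa_\ell(k_\ell+1)/2}$ the exponents
\[
-\tfrac{\kappa_\ell(k_\ell+1)}{2}+\tfrac{\tilde{\kappa}_\ell(\tilde{k}_\ell+1)}{2}+\tfrac{(\kappa_\ell-\tilde{\kappa}_\ell)(k_\ell+\tilde{k}_\ell+1)}{2}
\]
collapse to $(\kappa_\ell\tilde{k}_\ell-\tilde{\kappa}_\ell k_\ell)/2$, the three minus signs combine to the overall $-1$, and the $(0,0)$ row and column come out identically $1$, matching the claimed $s_{(k,\kappa),(\tilde{k},\tilde{\kappa})}$ in every case.
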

\begin{proof}
Let $\Phi = \cG(m)$. It follows from \cite[Lemma 5.1]{BoKi18} that 
\begin{align*}
\lefteqn{S = \Phi^\ast \Phi - (\abs{m}-1)I} \\
&=\left(\!\!\begin{array}{lr}-\prod_{\ell=0}^s \zeta_{m_\ell}^{(\kappa_\ell-\tilde{\kappa}_\ell)(\tilde{k}_\ell+k_\ell-1)/2}, \!& \!(\tilde{k},\tilde{\kappa}) \neq (k,\kappa) \\ 0, \!&\!(\tilde{k},\tilde{\kappa})=(k,\kappa) \end{array} \!\!\right).
\end{align*}
We form a related equiangular tight frame $\overline{\Phi}$ by multiplying each $\varphi_{k,\kappa}$ by $\prod_{\ell=0}^s \zeta_{m_\ell}^{-\kappa_\ell(k_\ell-1)/2}$ and additionally  $\varphi_{0,0}$ by $-1$. Since each vector is multiplied by a unimodular, $\overline{\Phi}$ is switching equivalent to $\Phi$.  The signature matrix $\overline{S}$ of $\overline{\Phi}$ as desired.
\end{proof}
We need one last definition to prove that certain Gabor-Steiner ETFs correspond to roux lines.
\begin{defn}
Let $A$ be a matrix. The \emph{$N$th Hadamard product} $A^{\circ N}$ of $A$ is the $N$th component-wise product.  Namely, $(A^{\circ N})_{j,k} = (A_{j,k})^N$.
\end{defn}
We may now present the so-called roux lines detector~\cite[Corollary 4.6]{IvMi18}.
\begin{prop}\label{prop:roux}
Given a normalized signature matrix $\overline{S}$, $\overline{S}$ corresponds to equal-norm representatives of roux lines if and only if the following occur simultaneously:
\begin{enumerate}
\item The entries of $\overline{S}$ are all roots of unity.
\item Every Hadamard power of $\overline{S}$ has exactly two eigenvalues.
\end{enumerate}
\end{prop}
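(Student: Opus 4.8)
The plan is to reinterpret both conditions in terms of the Hadamard (Schur) algebra generated by $\overline{S}$ and to recognize that together they encode exactly the Bose--Mesner axioms defining a roux. Let $n$ be the number of lines and let $\omega_1,\dots,\omega_r$ be the distinct off-diagonal entries of $\overline{S}$. For each $t$ let $B_t$ be the $0$--$1$ matrix with $(B_t)_{jk}=1$ precisely when $j\neq k$ and $\overline{S}_{jk}=\omega_t$, so that $I+\sum_{t=1}^r B_t=J$, with $J$ the all-ones matrix. Since $\overline{S}$ is Hermitian with zero diagonal and unimodular off-diagonal entries, each $B_t$ is real $0$--$1$ and $B_t^\ast=B_{t'}$ for the index with $\omega_{t'}=\overline{\omega_t}$, so the relation partition is closed under conjugate transpose. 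First I would note that condition~(1) is exactly what makes this relation set finite: if the $\omega_t$ are $m$th roots of unity the Hadamard powers are $m$-periodic, and the identity $\overline{S}^{\circ N}=\sum_{t=1}^r \omega_t^{\,N}B_t$ (for $N\ge 1$) together with invertibility of the Vandermonde matrix $(\omega_t^{\,N})$ gives $\lspan\{\overline{S}^{\circ N}:N\ge 1\}=\lspan\{B_1,\dots,B_r\}$. Adjoining the identity, set
\[
\cA:=\lspan\{\,I,B_1,\dots,B_r\,\},
\]
which also contains $J=I+\sum_t B_t$. Conversely, if some entry is not a root of unity the Hadamard powers never close up and no finite relation algebra exists, so condition~(1) is necessary; this reduces everything to analyzing $\cA$.

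By construction $\cA$ contains $I$ and $J$, is closed under the Hadamard product, and is closed under the conjugate transpose, so it is the adjacency span of a symmetric relation partition. It is the Bose--Mesner algebra of an association scheme --- equivalently, $\overline{S}$ corresponds to roux lines in the sense of \cite{IvMi18} --- if and only if $\cA$ is in addition closed under ordinary matrix multiplication (commutativity then following from the Hermitian, transpose-closed structure). The crux is therefore to show that this one remaining closure property is equivalent to condition~(2). The bridge is the observation, via the signature-matrix characterization of Proposition~\ref{prop:sign}, that condition~(2) says precisely that every Hadamard power is itself an ETF signature matrix: each $\overline{S}^{\circ N}$ automatically satisfies properties~(i)--(iii) (Hermitian, zero diagonal, unimodular off-diagonal), so it is the signature matrix of an equal-norm tight frame exactly when it enjoys the two-eigenvalue property~(iv), with the multiplicity statement~(v) then forced. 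Thus condition~(2) asserts that the entire Hadamard orbit of $\overline{S}$ consists of tight (indeed equiangular) signature matrices.

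For the forward implication I would assume $\overline{S}$ is roux and exploit the finite abelian group $\Gamma$ underlying the scheme: its off-diagonal entries are values of a character of $\Gamma$, giving~(1), and the primitive idempotents pair off under the $\Gamma$-action so that each character combination $\overline{S}^{\circ N}=\sum_t\omega_t^{\,N}B_t$ collapses to only two distinct eigenvalues, giving~(2). For the reverse implication I would assume~(1) and~(2) and build the scheme: from two eigenvalues $\lambda,\mu$ each power satisfies $(\overline{S}^{\circ N}-\lambda I)(\overline{S}^{\circ N}-\mu I)=0$, hence $(\overline{S}^{\circ N})^2=a_N\overline{S}^{\circ N}+b_N I\in\cA$, and under the discrete Fourier inversion expressing each $B_t$ as a combination of the Hadamard powers and $I$, these quadratics are dual to the structure constants $B_sB_t=\sum_u p^u_{st}B_u$ that encode multiplicative closure. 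The main obstacle is exactly this reverse assembly: upgrading the per-power spectral relations (a statement about each single generator) to closure under all pairwise products $B_sB_t$ (a statement about the whole span). I would handle it by first showing the Hermitian matrices $\overline{S}^{\circ N}$ pairwise commute, so that they are simultaneously diagonalizable and the two-eigenvalue hypothesis caps the number of joint spectral projections at $r+1$, and then invoking the Bose--Mesner/Krein duality between the Hadamard algebra and the matrix-product algebra to convert tightness of every Hadamard power into multiplicative closure of $\cA$. Establishing that commutativity, rather than the formal duality step, is where I expect the real work to lie.
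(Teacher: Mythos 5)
First, a framing point: the paper itself gives no proof of this proposition --- it is quoted verbatim as the ``roux lines detector,'' \cite[Corollary 4.6]{IvMi18} --- so your attempt can only be compared with the argument in that reference. Compared that way, your central reduction is wrong. You claim that $\overline{S}$ corresponds to roux lines if and only if $\cA=\lspan\{I,B_1,\hdots,B_r\}$, the span of the relations cut out by the distinct entry values of $\overline{S}$, is closed under ordinary matrix multiplication, i.e.\ is the Bose--Mesner algebra of an association scheme on the $n$ line labels. That is not the definition in \cite{IvMi18}, and it is false. There, a roux for a finite abelian group $\Gamma$ is an $n\times n$ matrix $B$ over the group ring $\bC[\Gamma]$ with zero diagonal, off-diagonal entries in $\Gamma$, $B^\ast = B$, and $B^2=(n-1)I+\sum_{g\in\Gamma}c_g\,gB$; the association scheme it generates lives on $n\abs{\Gamma}$ vertices (it is a cover of the complete graph $K_n$), not on the $n$ lines. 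Concrete counterexample to your equivalence: the six equiangular lines in $\bR^3$ through antipodal vertices of the icosahedron. They form a regular two-graph, which is precisely the real ($\Gamma=\bZ_2$) case of roux lines, and conditions (1)--(2) hold: entries are $\pm1$, odd Hadamard powers equal $\overline{S}$ (eigenvalues $\pm\sqrt{5}$), even ones equal $J-I$. But the normalized signature matrix has all ones in its first row and column, so the $0$--$1$ relation matrix $B_1$ recording the entry $+1$ has row sum $5$ in the first row and $3$ in the others (one plus the two non-neighbors in the pentagon descendant); writing $B_1J=aI+bB_1+cB_2$ and comparing the $(1,2)$ and $(2,1)$ entries forces $b=5$ and $b=3$ simultaneously. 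Since every relation of an association scheme has constant valency, $\cA$ is not multiplicatively closed even though the lines are roux. So the ``crux'' you reduce the proposition to fails in both directions.

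Beyond this, the reverse implication is incomplete by your own admission: pairwise commutativity of the Hadamard powers and the ``Bose--Mesner/Krein duality'' step are deferred as ``the real work,'' and neither is supplied. In the correct group-ring framework neither is needed. Since the entries of $\overline{S}$ are $m$th roots of unity, lift $\overline{S}$ to $B$ over $\bC[\bZ_m]$; applying the characters of $\bZ_m$ entrywise is an algebra isomorphism from $n\times n$ matrices over $\bC[\bZ_m]$ onto $m$-tuples of complex $n\times n$ matrices, and it carries $B$ to the tuple of Hadamard powers $(\overline{S}^{\circ N})_{N=0}^{m-1}$. Because each $\overline{S}^{\circ N}$ is Hermitian with zero diagonal and unimodular off-diagonal entries, the diagonal of its square is constantly $n-1$, so ``exactly two eigenvalues'' is equivalent to the quadratic $(\overline{S}^{\circ N})^2=\lambda_N\overline{S}^{\circ N}+(n-1)I$. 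Fourier inversion over $\bZ_m$ assembles these $m$ quadratics into the single roux equation $B^2=(n-1)I+\sum_g c_g\,gB$ (the $c_g$ are automatically nonnegative integers, being multiplicities of group elements in the entries of $B^2$), and conversely each character applied to that one equation returns a quadratic for the corresponding Hadamard power. The per-power spectral hypothesis is thus the character-by-character shadow of one identity in $\bC[\bZ_m]^{n\times n}$; no simultaneous diagonalization, and no association scheme on $n$ points, enters the argument. Your forward-direction sketch (character values are roots of unity; the roux equation specializes to two eigenvalues per power) is the one part that survives translation into this framework.
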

\begin{thm}
Let $p\geq 3$ be prime.  For all $m=(p,p,\hdots,p)$, $\cG(m)$ is a set of roux lines.
\end{thm}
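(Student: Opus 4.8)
The plan is to apply the roux-lines detector (Proposition~\ref{prop:roux}) to the explicit normalized signature matrix $\overline{S}$ computed above for $\cG(m)$, specialized to $m_\ell = p$ for every $\ell$. Two things must be checked: that every off-diagonal entry of $\overline{S}$ is a root of unity, and that every Hadamard power $\overline{S}^{\circ N}$ has exactly two eigenvalues. The first is immediate: since $p$ is odd, $2$ is invertible modulo $p$, so each generic entry $-\prod_\ell \zeta_p^{(\kappa_\ell\tilde{k}_\ell - \tilde{\kappa}_\ell k_\ell)/2}$ is $-1$ times a $p$th root of unity, hence a $2p$th root of unity, while the normalized first row and column are $1$. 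Thus the entire burden lies in the two-eigenvalue condition, which is the hard part.

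The key idea is to read the generic entries through the standard symplectic form on $G := \bigoplus_{\ell=0}^s \bZ_p \times \bigoplus_{\ell=0}^s \bZ_p$. Writing $g = (k,\kappa)$ and $\omega(g,\tilde{g}) = \sum_{\ell=0}^s(\kappa_\ell\tilde{k}_\ell - \tilde{\kappa}_\ell k_\ell)$, the generic entry becomes $s_{g,\tilde{g}} = -\zeta_p^{\omega(g,\tilde{g})/2}$, where division by $2$ means multiplication by $2^{-1}\bmod p$. I would then introduce the bicharacter matrix $M^{(N)} = \big(\zeta_p^{N\omega(g,\tilde{g})/2}\big)_{g,\tilde{g}}$, whose diagonal and whose $(0,0)$-row and column are all $1$, because $\omega$ is alternating and $\omega(0,\cdot)=0$. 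Comparing entrywise with the Hadamard power, one finds $\overline{S}^{\circ N} = M^{(N)} - I$ when $N$ is even, and $\overline{S}^{\circ N} = D_0\,(I - M^{(N)})\,D_0$ when $N$ is odd, where $D_0 = \diag(-1,1,\dots,1)$ negates the $(0,0)$ index; this switching absorbs the clash between the $+1$ normalized first row and column and the $(-1)^N$ sign carried by the generic entries. Since $D_0$ is an involution and the $\pm I$ only shifts the spectrum, it suffices to prove that $M^{(N)}$ has exactly two eigenvalues.

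This last step is a clean character-orthogonality computation. Using bilinearity, $\omega(g,h)+\omega(h,\tilde{g})=\omega(g-\tilde{g},h)$, so the $(g,\tilde{g})$ entry of $(M^{(N)})^2$ equals $\sum_{h\in G}\zeta_p^{N\omega(g-\tilde{g},h)/2}$. When $p\nmid N$ the form $N\omega$ is still nondegenerate, so for $g\neq\tilde{g}$ the map $h\mapsto\omega(g-\tilde{g},h)$ is a nonzero homomorphism onto $\bZ_p$ and the sum vanishes, while for $g=\tilde{g}$ it equals $\abs{G}$; hence $(M^{(N)})^2=\abs{G}\,I$ and $M^{(N)}$ has only the eigenvalues $\pm\abs{m}$. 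When $p\mid N$ the matrix degenerates to the all-ones matrix $J$, with the two eigenvalues $\abs{G}$ and $0$. In either case $\overline{S}^{\circ N}$ has exactly two eigenvalues, completing the verification. A trace check, $\tr M^{(N)}=\abs{G}$, confirms both eigenvalues genuinely occur and, reassuringly, pins the larger-eigenvalue multiplicity of $\overline{S}$ itself at $d=\abs{m}(\abs{m}-1)/2$.

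The main obstacle I anticipate is not the eigenvalue computation but the bookkeeping that links $\overline{S}^{\circ N}$ to the structurally transparent $M^{(N)}$: the normalization forces the first row and column to be $1$ rather than to follow the symplectic formula, and the $(-1)^N$ factor then behaves differently on these normalized entries than on the generic ones. Getting the switching by $D_0$ and the $\pm I$ shift exactly right, and isolating the degenerate case $p\mid N$ where nondegeneracy of $N\omega$ fails, is where care is needed; once the symplectic reformulation is in place, the remainder is routine.
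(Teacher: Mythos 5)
Your proof is correct, but the heart of it is genuinely different from the paper's. Both arguments have the same skeleton: take the normalized signature matrix $\overline{S}$ of Proposition~\ref{prop:sign}, feed it to the roux detector of Proposition~\ref{prop:roux}, and observe that condition (1) is immediate. For condition (2), however, the paper argues structurally: when $p \nmid N$, the entries $\bigl(-\zeta_p^{\omega/2}\bigr)^N$ involve the primitive root $\zeta_p^N$, so $\overline{S}^{\circ N}$ is again (up to sign) a normalized signature matrix of a Gabor--Steiner ETF and therefore has two eigenvalues by the ETF/signature-matrix correspondence, while for $p \mid N$ it is the normalized signature matrix of a simplex. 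You instead prove the two-eigenvalue property from scratch: you strip off the normalization via the switching matrix $D_0$ and the parity of $N$, reduce everything to the bicharacter matrix $M^{(N)}$, and show $(M^{(N)})^2 = \abs{G}\,I$ by character orthogonality together with nondegeneracy of the symplectic form (with the degenerate case $p \mid N$ giving $M^{(N)} = J$). Each route buys something. The paper's is shorter and identifies \emph{what} the Hadamard powers are --- other Gabor--Steiner ETFs and simplices --- which is what makes the closing remark about Naimark complements and DRACKNs visible. Yours is self-contained (it needs only the entry formula for $\overline{S}$, not the ETF-existence direction of the signature-matrix characterization), produces the eigenvalues $\pm\abs{m}$ and their multiplicities explicitly, and makes transparent exactly where primality of $p$ is used, namely that $N\omega$ stays nondegenerate whenever $p \nmid N$. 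It is also more careful than the paper on one point: the paper's claims that the powers ``simply yield'' Gabor--Steiner signature matrices (for $p \nmid N$) and have ``negative ones in every entry'' (for $p \mid N$) are literally true only for odd $N$; for even $N$ the generic entries carry a $+$ sign, so one gets negated variants such as $J - I$. This is harmless, since negation and the shift by $\pm I$ preserve the number of distinct eigenvalues, but your $D_0$-and-parity bookkeeping is precisely the fix for that gap.
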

\begin{proof}
We let $\overline{S}$ be the normalized signature matrix of $\cG(m)$ presented in Proposition~\ref{prop:sign} and $s+1$ be the length of $m$. That (1) from Proposition~\ref{prop:roux} holds for $S$ is clear. Further, since $\zeta_p^N$ is a primitive $p$th root of unity for all $N$ such that $p\not\vert N$, such $N$th Hadamard powers of $\overline{S}$ simply yield normalized signature matrices of Gabor-Steiner equiangular tight frames generated by possibly different primitive $p$th roots of unity.  These $\overline{S}^{\circ N}$ all have two eigenvalues.  If $p \vert N$, then $\overline{S}^{\circ N}$ has negative ones in every entry that is neither on the diagonal nor the first row or column.  Such a $\overline{S}^{\circ N}$ is a normalized signature matrix for a simplex of $p^{2s+2}$ vectors spanning a $p^{2s+1}$-dimensional space and thus also has two eigenvalues.
\end{proof}
Thus the Gabor-Steiner ETF generated from any finite, abelian $p$-group is roux. We note that an immediate porism of this result is that the so-called Naimark complement~~\cite{frame_book,Waldron18} of any $\cG(p, \hdots, p)$ with $p$ odd prime yields a cyclic DRACKN (distance-regular cover of the complete graph whose automorphism group that fixes each fibre as a set is cyclic) \cite{CGSZ16}.  We thank Joey Iverson for pointing out this result concerning DRACKNs.



\end{document}